\documentclass[11pt,table]{amsart}
\usepackage{amsmath}
\usepackage[margin=1.0in]{geometry}
\usepackage{amscd,amsmath,amsxtra,amsthm,amssymb,stmaryrd,xr,mathrsfs,mathtools,enumerate,commath, comment, enumitem, graphicx}
\usepackage{amsfonts, amssymb, amsthm, amsmath, braket, hyperref, mathtools, comment, mathrsfs, enumitem, cleveref}
\usepackage{stmaryrd}
\usepackage{orcidlink}
\usepackage{multirow}
\usepackage{xcolor}
\usepackage{commath}
\usepackage{comment}
\usepackage{tikz-cd}
\usepackage{tkz-graph}
\usepackage{colonequals}
\usepackage{hyperref}
\usepackage{graphicx}
\usepackage{bigstrut}
\usepackage{longtable, multirow, array}
\usepackage{enumitem}

\usepackage{longtable} 
\usepackage{pdflscape} 
\usepackage{booktabs}
\usepackage{hyperref}
\definecolor{vegasgold}{rgb}{0.77, 0.7, 0.35}
\definecolor{darkgoldenrod}{rgb}{0.72, 0.53, 0.04}
\definecolor{gold(metallic)}{rgb}{0.83, 0.69, 0.22}
\hypersetup{
	colorlinks=true,
	linkcolor=darkgoldenrod,
	filecolor=brown,      
	urlcolor=gold(metallic),
	citecolor=darkgoldenrod,
}

\usepackage[all,cmtip]{xy}
\DeclareFontFamily{U}{wncy}{}
\DeclareFontShape{U}{wncy}{m}{n}{<->wncyr10}{}
\DeclareSymbolFont{mcy}{U}{wncy}{m}{n}
\DeclareMathSymbol{\Sh}{\mathord}{mcy}{"58}
\usepackage[T2A,T1]{fontenc}
\usepackage[OT2,T1]{fontenc}

\usepackage{tikz}
\usetikzlibrary{shapes.geometric}
\usetikzlibrary{decorations.markings}
\tikzset{every loop/.style={min distance=10mm,looseness=10}}
\tikzstyle{vertex}=[auto=left,circle,minimum size=1pt,inner sep=0pt]

\newtheorem{theorem}{Theorem}[section]
\newtheorem{lemma}[theorem]{Lemma}

\newtheorem*{theorem*}{Theorem}
\newtheorem*{ass*}{Assumption}

\newtheorem{proposition}[theorem]{Proposition}
\newtheorem{question}[theorem]{Question}

\newcommand{\D}{\mathfrak{D}}

\numberwithin{equation}{section}

\begin{document}

	\title[On The Index of  trinomial over Valued Fields]{On The Index of generalized trinomial over Valued Fields}

	\author[V.~Adithya]{V. Adithya}

	\address[Adithya]{Indian Statistical Institute, New Delhi, India}
	\email{vadithya290102@gmail.com}
	\keywords{ monogenity, integrally closed, Krull valuation.}
	\subjclass[2020]{12J10; 12J25; 11R29}

\begin{abstract}
Let $\nu$ be a Krull valuation of arbitrary rank on a field with valuation ring $R_\nu$, and let $\theta$ be a root of the irreducible polynomial $F(x)=(x^k+c)^m-ax^n\in R_\nu[x]$ and $1\leq n<km$. We establish necessary and sufficient conditions for the integral closedness of $R_\nu[\theta]$, expressed explicitly in terms of the coefficients $a$, $b$, and the integers $m$, $n$, and $k$. In particular, when $\nu$ is the $p$-adic valuation on $\mathbb{Q}$, our results yield criteria for determining the primes dividing the index $[\mathbb{Z}_K:\mathbb{Z}[\theta]],$
where $K=\mathbb{Q}(\theta)$ and $\mathbb{Z}_K$ denotes the ring of integers of $K$.
\end{abstract}
\maketitle

\section{Introduction}
Determining when a simple ring extension $R[\theta]$ is integrally closed in its quotient field is a classical problem in algebraic number theory. A major motivation for this problem comes from the study of monogenic number fields. Recall that an algebraic number field $K$ with ring of integers $\mathbb{Z}_K$ is called \textit{monogenic} if it admits a power integral basis, meaning $\mathbb{Z}_K = \mathbb{Z}[\theta]$ for some algebraic integer $\theta$. This global property can be checked locally: $\mathbb{Z}_K = \mathbb{Z}[\theta]$ if and only if the localization $\mathbb{Z}_{(p)}[\theta]$ is integrally closed for every prime $p$. In 1878, Dedekind provided a necessary and sufficient criterion to determine whether $\mathbb{Z}_{(p)}[\theta]$ is integrally closed, based on the factorization of the minimal polynomial of $\theta$ modulo $p$.

Over the years, Dedekind's criterion has been extended to more general settings. In 2006, Ershov~\cite{MR2455373} generalized the criterion to arbitrary valuation rings. Later, in 2016, Khanduja and Jhorar~\cite{KJ} established a highly useful equivalent formulation of this Generalized Dedekind Criterion. They showed that if $v$ is a Krull valuation on a field with valuation ring $R_v$ and maximal ideal $M_v$, then the simple extension $R_v[\theta]$ is integrally closed if and only if the minimal polynomial $F(x)$ does not belong to the ideal $\langle M_v, g_i(x) \rangle^2$ for any monic polynomial $g_i(x) \in R_v[x]$ that is irreducible and a factor of $F(x)$ modulo $M_v$.

These criteria have proved to be powerful tools for investigating the monogenity of specific classes of polynomials. For example,  in 2017, Jakhar, Khanduja, and Sangwan~\cite{MR3713088} established necessary and sufficient conditions for the monogenity of the trinomial $x^n+ax^k+b$. The following year, the same authors~\cite{MR3720858} used valuation-theoretic methods to give explicit criteria for the integral closedness of $R_v[\theta]$ when $\theta$ is a root of $x^n+ax^m+b$. Since then, several authors have studied the monogenity of generalized trinomials and their compositions \cite{MR4925475, MR5046175, MR4898732, J-K-Y, MR4828310}. More recently, Barman \textit{et al.}~\cite{BARMAN2026108328} gave necessary and sufficient conditions for the monogenity of the polynomial $(x^k+c)^m - ax^n$ over $\mathbb{Z}$. For a broader overview of recent developments in the theory of monogenic number fields, we refer to the survey articles by Ga\'al~\cite{Gaal2024} and Evertse and Gy\H{o}ry~\cite{EvertseGyory2025}.

These developments naturally lead to the following question:

\begin{question}
    Under what conditions is the simple extension $R_v[\theta]$ integrally closed when $\theta$ is a root of a given irreducible polynomial $f(x) \in R_v[x]$?
\end{question}

In this paper, we extend the results of Barman \textit{et al.}~\cite{BARMAN2026108328} by generalizing the arithmetic properties of the family $F(x) = (x^k+c)^m - ax^n$ to the arbitrary valuation ring setting. Let $v$ be a Krull valuation of arbitrary rank on a field with valuation ring $R_v$, and let $\theta$ be a root of the irreducible polynomial $F(x) \in R_v[x]$ with $km > n \ge 1$. We establish explicit necessary and sufficient conditions for $R_v[\theta]$ to be integrally closed in terms of the coefficients $a, c$ and the integers $k, m, n$. Since the reduction of these polynomials modulo $M_v$ often yields repeated roots, our proofs rely on a careful analysis of the discriminant and modular factorizations using the equivalent condition of Dedekind criterion of provided by  Khanduja and Jhorar.
As an immediate consequence of our main result Theorem~\ref{monogeneity of f(x)}, we show how this general criterion specializes to the $p$-adic valuation on $\mathbb{Q}$. This explicitly recovers the criteria for determining the primes that divide the index $[\mathbb{Z}_K : \mathbb{Z}[\theta]]$. The discriminant of $F(x)=(x^k+c)^m-ax^n$ was provided by Barman \textit{et al.} \cite{BARMAN2026108328}, which is as follows:
\begin{equation}
    \D_F
=(-1)^{\binom{km}{2}+(km+n+t)(m-1)}\,a^{\,k(m-1)}c^{m(n-1)}\,
\left[\,(km)^{\,k_1m}c^{k_1m-n_1} - a^{k_1}n^{\,n_1}\,(km-n)^{\,k_1m-n_1}\right]^{t}.
\end{equation}
where $n = n_1 t$, $k = k_1 t$ and $t=\gcd(n,k)$. We shall denote by $\mathcal{C},\, \mathcal E$, the integers defined by
\begin{equation}\label{CE}
	\mathcal C=(km)^{\,k_1m}c^{k_1m-n_1}  ~{\rm and}  ~\mathcal E= a^{k_1}n^{\,n_1}\,(km-n)^{\,k_1m-n_1},
\end{equation}
then in view of the above theorem $\D_F = (-1)^{\binom{km}{2}+(km+n+t)(m-1)}\,a^{\,k(m-1)}c^{m(n-1)}\,(\mathcal C-\mathcal E)^{t}.$

\section{main result}
\begin{theorem} \label{monogeneity of f(x)}
    Let $v$ be a Krull valuation of arbitrary rank on a field having valuation ring $R_v$, maximal ideal $M_v$, and perfect residue field. Let $p$ denote the characteristic of the residue field $R_v/M_v$, if it is positive. Let $\theta$ be a root of the irreducible polynomial $F(x) = (x^k+c)^m - ax^n \in R_v[x]$ with $km > n \ge 1$. Let $\gcd(n,k) = t$ and $n = n_1t,\, k = k_1t$. Let $\D_F$ be the discriminant of $F(x)$. Under the assumption $v(\D_F) > 0$, the ring extension $R_v[\theta]$ is integrally closed if and only if $M_v$ is a principal ideal, say generated by $\pi$, and one of the following conditions is satisfied:
    \begin{enumerate}[label=\textup{(\roman{enumi})}]
        \item When $c \in M_v$ and $a \in M_v$, then $m=1$ and $v(c) = v(\pi)$;
        
        \item When $n=1$, $c \in M_v$ and $a \notin M_v$ with $j \ge 1$ as the highest power of $p$ dividing $km-1$, then either $v(c_2) \ge v(\pi)$ and $v(a_1) = 0$ or $v(c_2(a^k a_1^{km-1} - (-m c_2 a)^{km-1})) = 0$, where $c_2 = \frac{c}{\pi}$ and $a_1 = \frac{ (a')^{p^j}-a}{\pi}$ with $a' \in R_v$ such that $(\overline{a'})^{p^j} = \overline{a}$;
        
        \item When $n \ge 2$, $c \in M_v$ and $a \notin M_v$ with $\ell \ge 0$ as the highest power of $p$ dividing $k-n$, then $m=1$ and 
        \begin{itemize}
            \item[\textup{(a)}] If $\ell=0$, then $v(c) = v(\pi)$;
            \item[\textup{(b)}] If $\ell \ge 1$, then either $v(a_1) \ge v(\pi)$ and $v(c_2) = 0$, or $v\bigl(a_1 [a^{n_1} a_1^{k_1-n_1} - (-c_2)^{k_1-n_1}]\bigr) = 0$, where $c_2 = \frac{c}{\pi}$ and $a_1 = \frac{(a')^{p^\ell}-a}{\pi}$ with $a' \in R_v$ such that $(\overline{a'})^{p^\ell} = \overline{a}$;
        \end{itemize}
        
        \item When $c \notin M_v$ and $a \in M_v$ and further:
        \begin{itemize}
            \item[\textup{(a)}] If $m \ge 2$, then $v(a) = v(\pi)$;
            \item[\textup{(b)}] If $m = 1$ with $\ell \ge 1$ as the highest power of $p$ dividing $k$, then either $v(a_2) \ge v(\pi)$ and $v(c_1) = 0$ or $v\bigl(a_2 [a_2^{k_1}(-c)^{n_1} - c_1^{k_1}]\bigr) = 0$, where $a_2 = \frac{a}{\pi}$ and $c_1 = \frac{c + (-c')^{p^\ell}}{\pi}$ with $c' \in R_v$ such that $(\overline{c'})^{p^\ell} = \overline{c}$;
        \end{itemize}
        
        \item When $ca \notin M_v$ and $k \in M_v$, where $k = s' p^j$ and $n = s p^j$ with $p \nmid \gcd(s,s')$, then the polynomials $\frac{1}{\pi}[a_1 x^n - \pi m t(x)(x^k + c)^{m-1}]$ and $(x^{s'} + c')^m - \overline{a} x^s$ are coprime modulo $M_v$, where $a_1 = \frac{a - (a')^{p^j}}{\pi}$, $t(x) =  \frac{1}{\pi} \left( \sum_{i=1}^{p^{j}-1} \binom{p^{j}}{i} x^{s' i} (c')^{p^{j}-i} + (c')^{p^j} - c \right) $, and $a', c' \in R_v$ such that $(\overline{a'})^{p^j} = \overline{a}$ and $(\overline{c'})^{p^j} = \overline{c}$;
        
        \item When $cak \notin M_v$ and $m \in M_v$, then $n \in M_v$ and $v(a - (a')^p) = v(\pi)$, where $a' \in R_v$ such that $(\overline{a'})^p = \overline{a}$;
        
        \item When $cakm \notin M_v$, then $\mathcal{C} - \mathcal{E} \notin M_v^2$, where $\mathcal{C}$ and $\mathcal{E}$ are as defined in \ref{CE}.
    \end{enumerate}
\end{theorem}
Let $\nu$ denote the $p$-adic valuation on $\mathbb{Q}$, and let $\theta$ be a root of the irreducible polynomial $f(x)=(x^k+c)^m-ax^n\in\mathbb{Z}[x].$ Applying Theorem~\ref{monogeneity of f(x)}, together with Fermat's little theorem, shows that the localized ring $\mathbb{Z}_{(p)}[\theta]$ is integrally closed in $K=\mathbb{Q}(\theta)$ if and only if one of the conditions stated above holds. Since
\[
\mathbb{Z}_{(p)}[\theta]\text{ is integrally closed}
\quad\Longleftrightarrow\quad
p\nmid[\mathbb{Z}_K:\mathbb{Z}[\theta]],
\]
the corresponding criterion for the index follows immediately. Thus, our general result specializes to the $p$-adic setting and recovers both the  Theorem ~2.2 of~\cite{BARMAN2026108328} as a special case, and the main result of \cite{MR3720858} by taking the following values $m=1,a=-1$ in $F(x)$. 

\section{preliminaries}
Throughout the paper, $\nu$ denotes a Krull valuation of arbitrary rank of a field with valuation ring $R_{\nu}$ and maximal ideal $M_{\nu}$. Given an element $\alpha \in R_{\nu}$, we denote its image under the canonical homomorphism from $R_{\nu}$ onto $R_{\nu}/M_{\nu}$ by $\bar{\alpha}$. Similarly, for a polynomial $g(x)\in R_{\nu}[x]$, $\bar{g}(x)$ is the polynomial over $R_{\nu}/M_{\nu}$ obtained by replacing each coefficient of $g(x)$ with its image modulo $M_{\nu}$.
The following observation can be easily deduced using Binomial expansion.
\begin{lemma}\label{binomial expansion} 
    Let $v$, $R_v, M_v$ defined as above.  Let the residue field $R_v/M_v$ be perfect with prime characteristic $p > 0$ and suppose that the maximal ideal is principal, say $M_v = \langle \pi \rangle$. Let $j \geq 1$ be the highest power of $p$ dividing $n=p^{j}s'$, and let $c \in R_v \setminus M_v$. Choose $c' \in R_v$ such that $(\overline{c'})^{\,p^j} = \overline{c}$. If $\overline{g}_1(x)\cdots \overline{g}_r(x)$ is the factorization of $x^{s'}-\overline{c'}$ into a product of distinct monic irreducible polynomials over $R_v/M_v$ with $g_i(x)\in R_v[x]$ monic, then 
    $$x^n-c = (g_1(x)\cdots g_r(x) + \pi H_1(x))^{p^j} + p \, g_1(x)\cdots g_r(x)H_2(x) + p \pi H_3(x) + (c')^{p^j} - c$$ 
    for some polynomials $H_1(x), H_2(x), H_3(x)\in R_v[x]$.
\end{lemma}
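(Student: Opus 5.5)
The plan is a direct computation: first lift the residue factorization to an identity over $R_v$ with an error term divisible by $\pi$, and then raise it to the $p^j$-th power with the binomial theorem. The main obstacle is checking the divisibility of the middle binomial terms; all other steps are routine.

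\textbf{Step 1 (lifting).} Put $G(x)=g_1(x)\cdots g_r(x)\in R_v[x]$. Since $c\notin M_v$ and the residue field is perfect, the element $c'$ with $(\overline{c'})^{p^j}=\overline{c}$ exists, and $c'\notin M_v$. By hypothesis $\overline{G}(x)=x^{s'}-\overline{c'}$. Hence every coefficient of the polynomial $x^{s'}-c'-G(x)$ lies in $M_v=\langle\pi\rangle$. So we can write
\[
x^{s'}=G(x)+\pi H_1(x)+c'
\]
for some $H_1(x)\in R_v[x]$. Set $A(x)=G(x)+\pi H_1(x)$.

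\textbf{Step 2 (binomial expansion).} Since $n=p^js'$, we have
\[
x^n=(A(x)+c')^{p^j}=A(x)^{p^j}+\sum_{i=1}^{p^j-1}\binom{p^j}{i}A(x)^i(c')^{p^j-i}+(c')^{p^j}.
\]
For $1\le i\le p^j-1$, the integer $\binom{p^j}{i}$ is divisible by $p$. Therefore $\binom{p^j}{i}=p\,b_i$ with $b_i\in\mathbb{Z}$, and its image in $R_v$ is $p$ times the image of $b_i$.

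\textbf{Step 3 (splitting the middle terms).} This is the key step. For $i\ge1$, expand
\[
A^i=(G+\pi H_1)^i=\sum_{l=0}^{i}\binom{i}{l}G^l(\pi H_1)^{i-l}.
\]
Every term with $l\ge1$ is divisible by $G$. The single term with $l=0$ is $\pi^iH_1^i$, which is divisible by $\pi$ because $i\ge1$. So $A^i=G\,Q_i+\pi\,\pi^{i-1}H_1^i$ with $Q_i\in R_v[x]$. Collecting these terms gives
\[
\sum_{i=1}^{p^j-1}\binom{p^j}{i}A^i(c')^{p^j-i}=p\,G(x)H_2(x)+p\,\pi H_3(x),
\]
where
\[
H_2=\sum_i b_i(c')^{p^j-i}Q_i,\qquad H_3=\sum_i b_i(c')^{p^j-i}\pi^{i-1}H_1^i,
\]
and both lie in $R_v[x]$.

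\textbf{Step 4 (conclusion).} Subtracting $c$ from both sides of the expansion in Step 2 yields
\[
x^n-c=(G(x)+\pi H_1(x))^{p^j}+p\,G(x)H_2(x)+p\pi H_3(x)+(c')^{p^j}-c,
\]
which is the asserted identity.

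Note that $p$ is treated here simply as an element of $R_v$. In the equal-characteristic case $p=0$ in $R_v$, so the two middle terms vanish, and the identity still holds.
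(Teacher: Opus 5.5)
Your proof is correct and takes the same route as the paper. The paper only remarks that the identity ``can be easily deduced using Binomial expansion,'' and your computation writes that out: lift to $x^{s'}=G(x)+\pi H_1(x)+c'$, expand $(A(x)+c')^{p^j}$, and split each middle term $\binom{p^j}{i}A^i(c')^{p^j-i}$ into a part divisible by $pG$ and a part divisible by $p\pi$.
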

 In 2010, Khanduja and
Kumar~\cite{MR2598906} generalized Dedekind criterion to general valuation rings of Krull valuations by
reducing the problem to Henselian valuation rings which is as follows:
\begin{theorem}[Generalized Dedekind Criterion (GDC)]
Let $v$, $R_v, M_v$ defined as above. Let $F(x)\in R_\nu[x]$ be a monic irreducible polynomial having a root
$\theta$ in its splitting field, and suppose that
\[
\overline{F}(x)
=
\overline{g}_1(x)^{e_1}\cdots
\overline{g}_t(x)^{e_t}
\]
is the factorization of $\overline{F}(x)$ into powers of distinct irreducible
polynomials over $R_\nu/M_\nu$, where each $g_i(x)\in R_\nu[x]$ is monic.
Then $R_\nu[\theta]$ is integrally closed if and only if one of the following
holds:

\begin{enumerate}
\item[(i)]
$e_i=1$ for every $1\le i\le t$;

\item[(ii)]
There exists a unique index $1\le j\le t$ such that $e_j>1$.
In this case, $M_\nu$ is a principal ideal, say
$M_\nu=(\pi)$, and
$\overline{g}_j(x)$ does not divide
$\overline{Y}(x)$, where
\[
Y(x)
=
\frac{1}{\pi}
\Bigl(
F(x)-g_1(x)^{e_1}\cdots g_t(x)^{e_t}
\Bigr)
\in R_\nu[x].
\]
\end{enumerate}
\end{theorem}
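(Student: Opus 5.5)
We derive the criterion from the Khanduja--Jhorar reformulation quoted in the introduction: $R_\nu[\theta]$ is integrally closed if and only if $F(x)\notin\langle M_\nu, g_i(x)\rangle^2$ for every $i$. The plan is to fix an index $i$ and decide, by looking at $e_i$ and at the structure of $M_\nu$, exactly when $F\in\langle M_\nu,g_i\rangle^2$. Throughout we write $G(x)=g_1(x)^{e_1}\cdots g_t(x)^{e_t}$, so that $F-G\in M_\nu[x]$.

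\emph{Step 1 (indices with $e_i=1$ are harmless).} Suppose $F\in\langle M_\nu,g_i\rangle^2$. Reducing modulo $M_\nu$ gives $\overline{g}_i^{\,2}\mid\overline{F}$. This contradicts $e_i=1$, because the $\overline{g}_j$ are distinct irreducibles. Hence, if all $e_i=1$, the Khanduja--Jhorar condition holds and $R_\nu[\theta]$ is integrally closed. This is alternative (i).

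\emph{Step 2 (non-principal $M_\nu$).} Suppose some $e_j\ge 2$. If $M_\nu$ is not principal, then $M_\nu$ has no least positive value, so $M_\nu=M_\nu^2$. Indeed, for any $\alpha\in M_\nu$ choose $\beta\in M_\nu$ with $0<\nu(\beta)<\nu(\alpha)$; then $\alpha=\beta\cdot(\alpha/\beta)$ with both factors in $M_\nu$. Each coefficient of $F-G$ therefore lies in $M_\nu^2\subseteq\langle M_\nu,g_j\rangle^2$. Also $G\in\langle g_j\rangle^2$ since $e_j\ge 2$. So $F\in\langle M_\nu,g_j\rangle^2$ and $R_\nu[\theta]$ is not integrally closed. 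This forces $M_\nu=(\pi)$ in alternative (ii).

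\emph{Step 3 (the key computation when $M_\nu=(\pi)$ and $e_j\ge 2$).} Write $F=G+\pi Y$ as in the statement. Since $G\in\langle\pi,g_j\rangle^2$, we have $F\in\langle\pi,g_j\rangle^2$ if and only if $\pi Y\in\langle \pi,g_j\rangle^2$.

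If $\overline{g}_j\mid\overline{Y}$, then $Y\in\langle\pi,g_j\rangle$, and multiplying by $\pi$ gives $\pi Y\in\langle\pi,g_j\rangle^2$.

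Conversely, suppose $\pi Y=\pi^2A+\pi g_jB+g_j^2C$ with $A,B,C\in R_\nu[x]$.
\begin{itemize}
\item Reducing mod $\pi$ gives $\overline{g}_j^{\,2}\,\overline{C}=0$, so $C=\pi C'$ for some $C'\in R_\nu[x]$.
\item Cancelling $\pi$ in the domain $R_\nu[x]$ gives $Y=\pi A+g_jB+g_j^2C'$.
\item Hence $\overline{g}_j\mid\overline{Y}$.
\end{itemize}
Thus, for each index with $e_j\ge2$, $F\notin\langle\pi,g_j\rangle^2$ exactly when $\overline{g}_j\nmid\overline{Y}$. Combined with Steps 1 and 2, this gives the criterion.

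\emph{Main obstacle.} The delicate point is the uniqueness clause in (ii). The argument above yields the condition ``$\overline{g}_j\nmid\overline{Y}$ for \emph{every} $j$ with $e_j>1$''. It does not show that at most one $e_j$ exceeds $1$; for example, Dedekind's classical criterion allows several repeated factors. So I would prove (ii) in the form ``$M_\nu=(\pi)$ and $\overline{g}_j\nmid\overline{Y}$ for each $j$ with $e_j>1$''. I would then check whether the uniqueness wording is merely a phrasing of this (the case of a single repeated factor being the one used later in the paper) or needs to be weakened accordingly.

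The only other point needing care is that the Khanduja--Jhorar reformulation is stated for the same arbitrary Krull valuation, so no Henselization step is needed. If one instead insisted on the original Khanduja--Kumar route, one would first pass to the Henselization, which has the same value group and residue field, and invoke Ore's theorem there.
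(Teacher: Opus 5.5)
Your argument is correct. Your ``main obstacle'' is a defect of the statement, not of your proof: read literally, the uniqueness clause in (ii) makes the ``only if'' direction false.

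For instance, take $F(x)=(x^2+x)^2+2=x^4+2x^3+x^2+2$. It is irreducible over $\mathbb{Q}$: it has no rational roots and no factorization into two monic integer quadratics. For the $2$-adic valuation, $\overline F=x^2(x+1)^2$ and $Y=1$. Hence $\mathbb{Z}_{(2)}[\theta]$ is integrally closed, equivalently $F\notin\langle 2,x\rangle^2$ and $F\notin\langle 2,x+1\rangle^2$, even though two exponents exceed $1$.

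The Khanduja--Kumar criterion that the paper quotes requires $\overline g_j\nmid\overline Y$ \emph{for every} $j$ with $e_j>1$. That is exactly the version your Steps 1--3 establish. The slip is harmless for the rest of the paper, because the proof of the main theorem works with the ideal-membership criterion of Theorem~\ref{valuation dede} rather than with this statement.

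As for the route, the paper gives no proof of this statement. It cites Khanduja and Kumar, whose argument, as the paper notes, reduces to Henselian valuation rings. You instead take the Khanduja--Jhorar reformulation, Theorem~\ref{valuation dede}, as a black box. You then translate membership in $\langle M_\nu,g_j\rangle^2$ into the divisibility $\overline g_j\mid\overline Y$. In the non-principal case you use $M_\nu=M_\nu^2$; in the principal case you cancel $\pi$ in the domain $R_\nu[x]$. Each of these steps is sound.

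This gives a short, elementary and rigorous proof that the two quoted criteria are equivalent. It is not an independent proof of the GDC, however. The paper presents Theorem~\ref{valuation dede} as an equivalent form of this very criterion. So the substantive fact, that these conditions characterize integral closedness of $R_\nu[\theta]$, still rests on the cited Khanduja--Kumar argument.
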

\noindent In 2016,  the following equivalent version of the above criterion was proved (see \cite{KJ}).
\begin{theorem}\label{valuation dede}
Let $v$, $R_v$, $M_v$, $\theta$, $F(x)$, and $g_i(x)$ be as in the previous theorem. Then $R_v[\theta]$ is integrally closed if and only if $F(x) \notin \langle M_v, g_i(x)\rangle ^2 \ \text{in } R_v[x] \text{ for any } i,\ 1 \le i \le t.$
\end{theorem}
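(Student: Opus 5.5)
The plan is to deduce Theorem~\ref{valuation dede} from the Generalized Dedekind Criterion by a prime-by-prime comparison. For each $i$ I will show that $F(x)\in\langle M_v,g_i(x)\rangle^2$ holds if and only if $e_i\ge 2$ and either $M_v$ is not principal, or $M_v=\langle\pi\rangle$ and $\overline{g}_i(x)\mid\overline{Y}(x)$.

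One caveat comes first. I read condition (ii) of the GDC in its standard Khanduja--Kumar form: $M_v=\langle\pi\rangle$, and $\overline{g}_j\nmid\overline{Y}$ for every $j$ with $e_j>1$. With that reading, the GDC says that $R_v[\theta]$ is integrally closed exactly when no index $i$ satisfies the characterization above. This is precisely the statement that $F\notin\langle M_v,g_i\rangle^2$ for every $i$.

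The first step is the easy direction on multiplicities. Since $\langle M_v,g_i\rangle^2 = M_v^2[x]+M_v g_i R_v[x]+g_i^2R_v[x]$, reducing modulo $M_v$ sends this ideal into $\overline{g}_i^{\,2}\,(R_v/M_v)[x]$. So if $F\in\langle M_v,g_i\rangle^2$, then $\overline{g}_i^{\,2}\mid\overline{F}$, which forces $e_i\ge 2$.

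Next, assume $e_i\ge2$ and write $F=g_1^{e_1}\cdots g_t^{e_t}+Q$ with $Q\in M_v[x]$. The product term lies in $g_i^2R_v[x]\subseteq\langle M_v,g_i\rangle^2$. Hence $F\in\langle M_v,g_i\rangle^2$ if and only if $Q\in\langle M_v,g_i\rangle^2$.

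The second step treats the case where $M_v$ is not principal. I will use the standard fact that the maximal ideal of a valuation ring is either principal or satisfies $M_v=M_v^2$. To see this, take $\alpha\in M_v$ with no minimal value; then there is $\beta\in M_v$ with $0<v(\beta)<v(\alpha)$, and $\alpha=\beta\cdot(\alpha/\beta)$ with both factors in $M_v$. Consequently $Q\in M_v[x]=M_v^2[x]\subseteq\langle M_v,g_i\rangle^2$, so $F$ lies in the square as soon as $e_i\ge2$. This matches the GDC, under which $R_v[\theta]$ then fails to be integrally closed.

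The third step, which I expect to be the main point, is the principal case $M_v=\langle\pi\rangle$. Here $Q=\pi Y$ with $Y$ as in the GDC. Divide $Y$ by the monic polynomial $g_i$ to get $Y=g_iq+r$ with $\deg r<\deg g_i$. It then suffices to show that
$$\pi Y\in\langle\pi,g_i\rangle^2 \iff \overline{r}=0 \iff \overline{g}_i\mid\overline{Y}.$$
For $(\Leftarrow)$: if $r\in\pi R_v[x]$, then $\pi Y=\pi g_iq+\pi r\in \pi g_iR_v[x]+\pi^2R_v[x]$, which is contained in the square. For $(\Rightarrow)$: suppose $\pi Y=\pi^2A+\pi g_iB+g_i^2C$. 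Reducing modulo $\pi$ gives $\overline{g}_i^{\,2}\overline{C}=0$, so $C=\pi C'$. Since $R_v[x]$ is a domain, dividing by $\pi$ gives $Y=\pi A+g_iB+g_i^2C'$. Reducing modulo $\pi$ once more yields $\overline{g}_i\mid\overline{Y}$.

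Finally, I must check that the condition $\overline{g}_i\mid\overline{Y}$ does not depend on the choice of monic lifts $g_j$. This follows because the left-hand side, $F\in\langle M_v,g_i\rangle^2$, depends only on $\overline{g}_i$: if $g_i'=g_i+\pi h$, then $\langle\pi,g_i'\rangle=\langle\pi,g_i\rangle$. Combining the three steps with the GDC proves the theorem.
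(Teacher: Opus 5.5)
Your proof is correct. The paper gives no argument for this statement; it only quotes it from Khanduja--Jhorar \cite{KJ} as an equivalent form of the Generalized Dedekind Criterion. What you have written is therefore a self-contained derivation of that equivalence, and it is the natural one, resting on three facts:
\begin{enumerate}
\item reduction modulo $M_v$ shows that $F\in\langle M_v,g_i\rangle^2$ forces $e_i\ge 2$;
\item for non-principal $M_v$, the identity $M_v=M_v^2$ puts $F$ in the square as soon as $e_i\ge 2$;
\item in the principal case, $\pi Y\in\langle\pi,g_i\rangle^2\iff\overline{g}_i\mid\overline{Y}$, obtained by forcing $C=\pi C'$ and cancelling $\pi$.
\end{enumerate}

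Your caveat about condition (ii) is necessary, not optional. As printed, with ``a unique index $j$ such that $e_j>1$'', the criterion is false, and the claimed equivalence would fail. For instance, $F(x)=x^2(x+1)^2+2$ is irreducible over $\mathbb{Q}$ and has two distinct repeated factors modulo $2$. Yet $F\notin\langle 2,x\rangle^2$ and $F\notin\langle 2,x+1\rangle^2$, and $\mathbb{Z}_{(2)}[\theta]$ is integrally closed, since here $Y=1$.

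Two small points. In Step 2, the phrase ``take $\alpha\in M_v$ with no minimal value'' should say that, since $M_v$ is not principal, $v(M_v)$ has no least element; hence every $\alpha\in M_v$ admits some $\beta\in M_v$ with $0<v(\beta)<v(\alpha)$. Also, your final paragraph on independence of the lifts is correct but not needed, because both the GDC and the statement use the same fixed lifts $g_i$.
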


\begin{lemma}\label{lemma:3.6}
    Let $v$, $R_v, M_v$ defined as above.  Let the residue field $R_v/M_v$ be perfect with prime characteristic $p > 0$ and suppose that the maximal ideal is principal $M_v = \langle \pi \rangle$. Let $F(x)=(x^k+c)^m-ax^n \in R_v[x]$ be a polynomial of degree $km$. Suppose $n=p^js$ and $k=p^js'$, where $p\nmid\gcd(s,s')$. Since the residue field is perfect, choose elements $a', c' \in R_v$ such that $(\overline{a'})^{p^j} = \overline{a}$ and $(\overline{c'})^{p^j} = \overline{c}$. Define $h(x)=(x^{s'}+c')^m-a'x^s \in R_v[x]$. Then $F(x)$ can be expressed as
    $$F(x) =  h(x)^{p^{j}}+((a')^{p^{j}} - a)x^{n} - \pi m\, t(x)(x^{k} + c)^{m-1}+ \pi\,h(x)h_{1}(x) + \pi^{2}h_{4}(x)$$
    for some $h_1(x), h_4(x) \in R_v[x]$ and 
    $$t(x) = \frac{1}{\pi} \left( \sum_{i=1}^{p^{j}-1} \binom{p^{j}}{i} x^{s' i} (c')^{p^{j}-i} + (c')^{p^j} - c \right) \in R_v[x].$$
\end{lemma}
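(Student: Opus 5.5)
We argue by direct expansion, writing $q=p^j$ and working in $R_v[x]$. The two ingredients are the congruence $(u+w)^{q}\equiv u^{q}+w^{q} \pmod{p}$, refined to keep track of the first-order error, and the fact that $p\in M_v=\langle \pi\rangle$, so that every binomial coefficient $\binom{q}{i}$ with $0<i<q$ lies in $\pi R_v$. The plan is to compare $h(x)^{q}$ with $F(x)$ and sort the difference into the four displayed pieces.

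First we expand $x^k+c=(x^{s'})^{q}+c$. By the definition of $t(x)$,
\[
(x^{s'}+c')^{q}=x^{k}+\sum_{i=1}^{q-1}\binom{q}{i}x^{s'i}(c')^{q-i}+(c')^{q}=x^k+c+\pi t(x).
\]
Here $t(x)\in R_v[x]$, because each middle coefficient is divisible by $p$, and $(c')^{q}-c\in M_v$ since $\overline{c'}^{\,q}=\overline c$. Raising to the $m$-th power gives
\[
(x^{s'}+c')^{qm}=(x^k+c)^m+\pi m\,t(x)(x^k+c)^{m-1}+\pi^2(\cdots).
\]
Thus $(x^k+c)^m=(x^{s'}+c')^{qm}-\pi m t(x)(x^k+c)^{m-1}+\pi^2 h_5(x)$ for some $h_5(x)\in R_v[x]$.

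Next we expand $h(x)^{q}=(A-B)^{q}$, where $A=(x^{s'}+c')^m$ and $B=a'x^s$. We get $h^{q}=A^{q}+(-1)^{q}B^{q}+\sum_{0<i<q}\binom{q}{i}A^{q-i}(-B)^{i}$. When $p=2$, the sign $(-1)^{q}$ is harmless because $-B^{q}\equiv B^{q}$ modulo $2B^{q}\in\pi R_v[x]$; this discrepancy is then absorbed as described below. The key point is to show that the middle sum lies in $\pi h\,R_v[x]+\pi^2R_v[x]$. This is the main obstacle. A naive estimate only gives membership in $\pi R_v[x]$, and we need the factor $h$.

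To get it, we substitute $A=h+B$ in the middle sum. Then
\[
\sum_{0<i<q}\binom{q}{i}A^{q-i}(-B)^i=\sum_{0<i<q}\binom{q}{i}(h+B)^{q-i}(-B)^i\equiv B^{q}\sum_{0<i<q}\binom{q}{i}(-1)^i \pmod{\pi h},
\]
and the last sum equals $(1-1)^{q}-1-(-1)^{q}=-1-(-1)^q$. This vanishes for odd $p$ and equals $-2$ for $p=2$, which exactly cancels the sign issue above. Hence
\[
h^{q}=(x^{s'}+c')^{qm}-(a')^{q}x^{n}+\pi h\,h_1(x).
\]
Here every term of the middle sum carries a binomial coefficient divisible by $p$, so the remainders are genuinely multiples of $\pi h$, with the $p=2$ correction placed in $\pi h_1$ or $\pi^2$ as appropriate.

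Finally we combine the two expansions:
\[
F=(x^k+c)^m-ax^n=h^{q}+((a')^{q}-a)x^n-\pi m t(x)(x^k+c)^{m-1}-\pi h h_1+\pi^2h_5.
\]
After renaming $-h_1$ as $h_1$ and $h_5$ as $h_4$, this is the stated identity.
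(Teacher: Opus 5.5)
Your proof is correct and follows essentially the same route as the paper. The paper also compares the expansion of $(x^{s'}+c')^{mp^j}$ as $(h(x)+a'x^s)^{p^j}$ with its expansion as $(x^k+c+\pi t(x))^m$. The only difference is that the paper expands $(h+a'x^s)^{p^j}$ directly, so each middle term $\binom{p^j}{i}h^{p^j-i}(a'x^s)^i$ visibly lies in $\pi h R_v[x]$; this makes your detour through $(A-B)^{q}$ and the $p=2$ sign bookkeeping unnecessary.
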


\begin{proof}
    By definition, $h(x) = (x^{s'} + c')^m - a' x^s$. We first expand $(x^{s'} + c')^{mp^j}$ using the binomial expansion:
    $$(x^{s'} + c')^{mp^j} = \bigl( h(x) + a'x^s \bigr)^{p^{j}} = h(x)^{p^{j}} + \pi\,h(x)h_{1}(x)  + \pi^{2}h_{2}(x)+ (a')^{p^{j}}x^{n}$$
    for some $h_{1}(x), h_{2}(x) \in R_v[x]$. \\
    On the other hand, we can rewrite the base of the power algebraically. We have:
    $$(x^{s'} + c')^{p^j} = x^{s'p^j} + (c')^{p^j} + \sum_{i=1}^{p^j-1} \binom{p^j}{i} x^{s' i} (c')^{p^j-i}.$$
    Noting that $p \in M_v = \langle \pi \rangle$, the binomial coefficients $\binom{p^j}{i}$ are divisible by $\pi$ for $1 \le i \le p^j-1$. 
    Since  the residue field is perfect, our choice of $c'$ ensures that $(\overline{c'})^{p^j} = \overline{c}$, which means $(c')^{p^j} - c \in \langle \pi \rangle$. We can therefore factor $\pi$ out of the remaining terms:
    $$(x^{s'} + c')^{p^j} = x^{k} + c + \pi\,t(x),$$
    where
    $$t(x) = \frac{1}{\pi} \left( \sum_{i=1}^{p^{j}-1} \binom{p^{j}}{i} x^{s' i} (c')^{p^{j}-i} + (c')^{p^j} - c \right) \in R_v[x].$$
    Expanding this expression to the power of $m$ using the binomial expansion yields:
    $$\bigl( x^{k} + c + \pi\,t(x) \bigr)^{m} = (x^{k} + c)^{m} + \pi m\, t(x) (x^{k} + c)^{m-1} + \pi^{2} h_{3}(x)$$
    for some $h_{3}(x) \in R_v[x]$. Comparing the two expansions for $(x^{s'} + c')^{mp^j}$ gives:
    $$h(x)^{p^{j}} + \pi\,h(x)h_{1}(x) + \pi^{2}h_{2}(x)+ (a')^{p^{j}}x^{n} = (x^{k} + c)^{m} + \pi m\, t(x) (x^{k} + c)^{m-1} + \pi^{2} h_{3}(x).$$
    Rearranging the terms, we obtain
    $$F(x) =  h(x)^{p^{j}}+((a')^{p^{j}} - a)x^{n} - \pi m\, t(x)(x^{k} + c)^{m-1}+ \pi\,h(x)h_{1}(x) + \pi^{2}h_{4}(x),$$
    where $h_{4}(x) = h_2(x) - h_3(x) \in R_v[x]$.
\end{proof}

{}

\begin{proposition}\label{2.06}
    Let $F(x), ~n_1, ~k_1, ~\mathcal{C}$ and $\mathcal{E}$ be as defined in Theorem \ref{monogeneity of f(x)}. Let $u_1, u_2\in \mathbb{Z}$ satisfy $ku_1+nu_2=t$. Assume that $v(ackm(km-n)) = 0$. Furthermore, suppose that there exist elements $\alpha_1, \alpha_2 \in R_v$ such that $\alpha_1 \equiv \frac{nc}{km-n} \pmod{M_v^2}$ and $\alpha_2\equiv a^{-1} \left(\frac{kmc}{km-n}\right)^m \pmod{M_v^2}$. Then the following are equivalent for $i=1,2$.
    \begin{itemize}
        \item[\textup{(i)}] $\mathcal{C}-\mathcal{E} \in M_v^i$;
        \item [\textup{(ii)}] $\alpha_2^{k_1}\equiv\alpha_1^{n_1}\pmod{M_v^i}$;
        \item [\textup{(iii)}] If $\beta \in R_v$ satisfies $\beta^t \equiv \alpha_1^{u_1} \alpha_2^{u_2} \pmod{M_v^i}$, then $\beta$ is a repeated root of $F(x)$ modulo $M_v^i$.
    \end{itemize}
\end{proposition}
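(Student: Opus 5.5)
The plan is to relate all three conditions to one explicit identity among the units $\alpha_1,\alpha_2,a,c$, working modulo $M_v^i$ throughout. Since $v(ackm(km-n))=0$, both $\alpha_1$ and $\alpha_2$ are units. The basic identity is
\[
\frac{\alpha_2^{k_1}}{\alpha_1^{n_1}}\equiv a^{-k_1}\frac{(kmc)^{k_1m}}{(km-n)^{k_1m}}\cdot\frac{(km-n)^{n_1}}{(nc)^{n_1}}
=\frac{(km)^{k_1m}c^{k_1m-n_1}}{a^{k_1}n^{n_1}(km-n)^{k_1m-n_1}}=\frac{\mathcal C}{\mathcal E}\pmod{M_v^2}.
\]
Because $\mathcal E$ is a unit, this gives $\mathcal C-\mathcal E\equiv \mathcal E\,\alpha_1^{-n_1}(\alpha_2^{k_1}-\alpha_1^{n_1})\pmod{M_v^i}$. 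Multiplying by units preserves membership in $M_v^i$, so (i)$\Leftrightarrow$(ii) follows at once.

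For (ii)$\Leftrightarrow$(iii), I will first locate the repeated roots formally. A common root of $F$ and $F'$ satisfies $(x^k+c)^m=ax^n$ and $mkx^{k-1}(x^k+c)^{m-1}=nax^{n-1}$. Dividing the second equation by the first, using $x\neq 0$ and $x^k+c\neq0$ (these hold mod $M_v$ since $ac$ is a unit), gives $kmx^k=n(x^k+c)$. Hence $x^k=\frac{nc}{km-n}$ and $x^k+c=\frac{kmc}{km-n}$, and then $x^n=a^{-1}(x^k+c)^m$. So $\beta$ is a repeated root mod $M_v^i$ exactly when $\beta^k\equiv\alpha_1$ and $\beta^n\equiv\alpha_2$. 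The one point to check is that this division argument reverses: given $\beta^k\equiv\alpha_1$ and $\beta^n\equiv\alpha_2$ mod $M_v^i$, substituting back gives $F(\beta)\equiv F'(\beta)\equiv0$ mod $M_v^i$. That is a direct computation with units.

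Now suppose $\beta^t\equiv\alpha_1^{u_1}\alpha_2^{u_2}$ mod $M_v^i$. Write $k=k_1t$ and $n=n_1t$. Then
\[
\beta^k\equiv\alpha_1^{u_1k_1}\alpha_2^{u_2k_1},\qquad
\beta^n\equiv\alpha_1^{u_1n_1}\alpha_2^{u_2n_1}.
\]
Since $k_1u_1+n_1u_2=1$, we can write $\alpha_1=\alpha_1^{k_1u_1}\alpha_1^{n_1u_2}$. It follows that $\beta^k\equiv\alpha_1$ holds if and only if $\alpha_2^{k_1u_2}\equiv\alpha_1^{n_1u_2}$. Similarly, $\beta^n\equiv\alpha_2$ holds if and only if $\alpha_1^{n_1u_1}\equiv\alpha_2^{k_1u_1}$. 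Hence (ii) implies both congruences, so $\beta$ is a repeated root.

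The converse is the main obstacle: (iii) gives $(\alpha_2^{k_1}/\alpha_1^{n_1})^{u_1}\equiv1$ and $(\alpha_2^{k_1}/\alpha_1^{n_1})^{u_2}\equiv1$ in $(R_v/M_v^i)^\times$. Since $\gcd(u_1,u_2)=1$ follows from $k_1u_1+n_1u_2=1$, a Bézout combination of these two relations gives $\alpha_2^{k_1}\equiv\alpha_1^{n_1}$, which is (ii).

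Two further points need care. Negative exponents $u_1,u_2$ are acceptable because the $\alpha_i$ are units. The hypothesis that such a $\beta$ exists should be read as part of (iii); if no such $\beta$ exists, (iii) holds vacuously, so I would either argue existence or take that case as understood in the statement.
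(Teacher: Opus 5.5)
Your proposal is correct and is essentially the paper's argument. It proves (i)$\Leftrightarrow$(ii) by rewriting $\mathcal C\equiv\mathcal E$ as $\alpha_2^{k_1}\equiv\alpha_1^{n_1}$ using units, and (ii)$\Rightarrow$(iii) by checking $F(\beta)\equiv F'(\beta)\equiv 0$. It proves (iii)$\Rightarrow$(ii) by showing a repeated root satisfies $\beta^k\equiv\alpha_1$ and $\beta^n\equiv\alpha_2$, then using $k_1u_1+n_1u_2=1$; your $\gamma^{u_1}\equiv\gamma^{u_2}\equiv 1$ step is the paper's raising to the powers $n_1$ and $k_1$ in another form. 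You also treat $i=2$ explicitly, where the paper only says ``similarly''.

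The existence caveat you raise is real and cannot be argued away. Take the $3$-adic valuation with $k=n=m=2$, $c=2$, $a=5$, so $F=x^4-x^2+4$, which is irreducible. Then $\mathcal C-\mathcal E=12\notin M_v^2$, while $\alpha_1^{u_1}\alpha_2^{u_2}\equiv 2\pmod{3}$ is not a square, so (iii) holds vacuously for $i=2$. Hence (iii) must be read as including the existence of $\beta$, or with $\beta$ taken in an unramified extension, as the paper tacitly does by working in the algebraic closure of $R_v/M_v$.

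Likewise, your claim that $\alpha_1$ and $\mathcal E$ are units needs $v(n)=0$. This does not follow from $v(ackm(km-n))=0$, and the paper makes the same slip, but it does hold in Case (vii), where the proposition is applied.
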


\begin{proof}
    We provide the proof of the equivalence for $i=1$. A similar argument will prove the case for $i=2$. Using $v(ackm(km-n)) = 0$, it is easy to note that $\alpha_1 \alpha_2 \not \equiv 0 \pmod{M_v}$.\\
\noindent (i) $\Leftrightarrow$ (ii) Since $\mathcal{C}-\mathcal{E} \in M_v$, we have
    \begin{align*}
        (km)^{k_1m} c^{k_1m-n_1} \equiv a^{k_1} n^{n_1} (km-n)^{k_1m-n_1} \pmod{M_v}.
    \end{align*}
    Since $v(ac(km-n)) = 0$, the above equation can be rewritten as
    \begin{equation}
        \left( \frac{nc}{km-n} \right)^{n_1} \equiv \left( a^{-1} \left( \frac{kmc}{km-n} \right)^m \right)^{k_1} \pmod{M_v}.
    \end{equation}
    Substituting the values of $\alpha_1$ and $\alpha_2$ in the above equation, we get
    \begin{align*}
        \alpha_1^{n_1}\equiv\alpha_2^{k_1}\pmod{M_v}.
    \end{align*}
    Note that all the preceding arguments are reversible, so the converse is also true.\\
    \noindent (ii) $\Rightarrow$ (iii) We show that $\beta$ with $\beta^t\equiv\alpha_1^{u_1}\alpha_2^{u_2}\pmod {M_v}$ is a repeated root of $F(x)$ modulo $M_v$. It is easy to observe that here $\beta \not\equiv 0 \pmod{M_v}$.
    First, we show that $F(\beta)\equiv 0\pmod{M_v}$.
    \begin{align*}
        F(\beta)&=(\beta^k+c)^{m}-a\beta^n,\\
        &=\left((\beta^t)^{k_1}+c\right)^{m} -a(\beta^t)^{n_1}.
    \end{align*}
    Using $\beta^t\equiv \alpha_1^{u_1}\alpha_2^{u_2}\pmod{M_v}$ in the above equation, we have
    \begin{equation}\label{.002}
        F(\beta)\equiv \left((\alpha_1^{u_1}\alpha_2^{u_2})^{k_1}+c\right)^{m} -a(\alpha_1^{u_1}\alpha_2^{u_2})^{n_1} \pmod{M_v}.
    \end{equation}
    Using $k_1u_1 + n_1u_2 = 1$ and $\alpha_2^{k_1}\equiv\alpha_1^{n_1}\pmod{M_v}$ in \eqref{.002}, we get
    \begin{equation*}
        F(\beta) \equiv (\alpha_1+c)^{m} -a \alpha_2\pmod{M_v}.
    \end{equation*}
    Substituting the values of $\alpha_1$ and $\alpha_2$ in the above equation, we see that
    \begin{align*}
        F(\beta)&\equiv \left(\frac{nc}{km-n} + c\right)^m -\left(\frac{kmc}{km-n}\right)^m\pmod{M_v},\\
        &\equiv0\pmod{M_v}.
    \end{align*}
    Now, we show that $F'(\beta)\equiv 0 \pmod{M_v}$. Taking the derivative of $F(x)$ and substituting $x=\beta$, we obtain
    \begin{align*}
        F'(\beta)&\equiv km\beta^{k-1}(\beta^{k}+c)^{m-1} - an\beta^{n-1}\pmod{M_v},\\
        &\equiv \beta^{-1} \left(km\beta^{k_1t}(\beta^{k_1t}+c)^{m-1} - an\beta^{n_1t} \right)\pmod{M_v},\\
        &\equiv \beta^{-1} \left(km(\alpha_1^{u_1}\alpha_2^{u_2})^{k_1}((\alpha_1^{u_1}\alpha_2^{u_2})^{k_1}+c)^{m-1} -an(\alpha_1^{u_1}\alpha_2^{u_2})^{n_1}\right)\pmod{M_v}.
    \end{align*}
    Using $n_1u_2+k_1u_1=1$ and $\alpha_{1}^{n_1}\equiv\alpha_2^{k_1}\pmod{M_v}$ in the above equation, we have
    \begin{align*}
        F'(\beta) \equiv \beta^{-1} \left(km\alpha_{1}(\alpha_{1}+c)^{m-1} - n a\alpha_2 \right)\pmod{M_v}.
    \end{align*}		
    Substituting values of $\alpha_1$ and $\alpha_{2}$ in the above equation, we obtain
    \begin{align}
        F'(\beta)&\equiv \beta^{-1} \left( \frac{kmnc}{km-n} \left( \frac{nc}{km-n} + c \right)^{m-1} -n \left( \frac{kmc}{km-n} \right)^m \right) \pmod{M_v},\nonumber\\
        &\equiv \beta^{-1} \left(n\frac{kmc}{km-n} \left(\frac{kmc}{km-n}\right)^{m-1} - n \left(\frac{kmc}{km-n}\right)^{m}\right) \pmod{M_v},\nonumber\\
        &\equiv 0 \pmod{M_v}.
    \end{align}
    
    \noindent (iii) $\Rightarrow$ (ii)
    Suppose that $\beta$ is any repeated root of $\overline{F}(x)= (x^k+\overline{c})^m - \overline{a} x^n$ in the algebraic closure of $R_v/M_v$. Note that $\beta \neq \overline{0}$ as $v(c) = 0$. Then
    \begin{equation}\label {0.008}
        \overline{F}(\beta) = ({\beta}^k+\overline{c})^m -\overline{a} {\beta}^n = \overline{0};~~ \overline{F}^{\prime}(\beta)= \overline{k} \overline{m} \beta^{k-1}({\beta}^k+\overline{c})^{m-1} -\overline{a} \overline{n}{\beta}^{n-1} = \overline{0}. 
    \end{equation}
    On substituting $\overline{a} {\beta}^{n} = ({\beta}^k+\overline{c})^m$ into \eqref{0.008}, we see that
    \begin{equation}
        \frac{1}{\beta}(\beta^k+c)^{m-1} \left[ (km-n) \beta^k - nc\right] \equiv 0 \pmod{M_v}.
    \end{equation}
    Observe that $(\beta^k+c) \not\equiv 0\pmod{M_v}$, otherwise in view of the first equation of \eqref{0.008}, we would obtain $a\beta \equiv 0\pmod{M_v}$, which is not possible. Therefore, keeping in mind that $v(km-n) = 0$, we have
    \begin{align}\label{0.009}
        \beta^k &\equiv \frac{nc}{km-n}\pmod{M_v},\nonumber\\
        &\equiv \alpha_1 \pmod{M_v}.
    \end{align}
    Changing $k$ by $tk_1$ and substituting the value $\beta^t \equiv \alpha_1^{u_1}\alpha_2^{u_2}\pmod{M_v}$ into \eqref{0.009}, we have
    \begin{align*}
        (\alpha_1^{u_1}\alpha_2^{u_2})^{k_1} \equiv \alpha_1 \pmod{M_v}.
    \end{align*}
    Taking the power $n_1$ on both sides of the above equation, we obtain
    \begin{equation}\label{.a}
        (\alpha_1^{u_1}\alpha_2^{u_2})^{k_1n_1} \equiv \alpha_1^{n_1} \pmod{M_v}.
    \end{equation}
    Substituting $\beta^k \equiv \frac{nc}{km-n}\pmod{M_v}$ into the first equation of \eqref{0.008}, we get
    \begin{align}\label{special}
        \beta^n & \equiv a^{-1} \left(\frac{kmc}{km-n}\right)^m \pmod{M_v},\\\nonumber
        &\equiv \alpha_2 \pmod{M_v}.
    \end{align}
    Changing $n$ by $n_1t$ and substituting $\beta^t \equiv \alpha_1^{u_1}\alpha_2^{u_2} \pmod{M_v}$, we observe
    \begin{align*}
        (\alpha_1^{u_1}\alpha_2^{u_2})^{n_1} \equiv \alpha_2 \pmod{M_v}.
    \end{align*}
    Taking the power $k_1$ on both sides of the above equation, we get
    \begin{equation}\label{.b}
        (\alpha_1^{u_1}\alpha_2^{u_2})^{n_1k_1} \equiv \alpha_2^{k_1} \pmod{M_v}.
    \end{equation}
    From \eqref{.a} and \eqref{.b}, we conclude that $\alpha_1^{n_1} \equiv \alpha_2^{k_1} \pmod{M_v}$. This completes the proof of the proposition.
\end{proof}
\section{Proof of  Theorem~\ref{monogeneity of f(x)}}
\begin{proof}
Let $v$ be a Krull valuation and assume that $v(\D_F) > 0$. In view of Theorem~\ref{valuation dede}, the ring $R_v[\theta]$ is integrally closed if and only if $F(x) \notin \langle M_v, g(x) \rangle^2$ for every monic polynomial $g(x) \in R_v[x]$ whose reduction $\overline{g}(x)$ is a repeated irreducible factor of $\overline{F}(x)$ modulo $M_v$. Note that since the maximal ideal $M_v$ is generated by $\pi$, this is equivalent to checking $F(x) \notin \langle \pi, g(x) \rangle^2$.\\[2mm]
\textbf{Case (i).} Suppose $c \in M_v$ and $a \in M_v$. In this case, $F(x) = (x^k + c)^m - a x^n \equiv x^{km} \pmod{M_v}.$ The only irreducible factor of $\overline{F}(x)$ is $x$. It is easily checked that $F(x) \in \langle \pi, x \rangle^2$ if and only if $c^m \in M_v^2$. 
Therefore, by Theorem~\ref{valuation dede}, $R_v[\theta]$ is integrally closed if and only if $F(x) \notin \langle \pi, x \rangle^2$, which holds if and only if $c^m \notin M_v^2$. Since $c \in M_v$, this condition can hold only when $m=1$ and $c \notin M_v^2$ (i.e., $v(c) = v(\pi)$). This completes the proof in this case.\\[1mm]
\textbf{Case (ii).} Suppose $n=1$, $c \in M_v$, and $a \notin M_v$. Since $v(\D_F) > 0$, it follows from the discriminant formula that $v(km-1) > 0$. Let $p$ be the characteristic of $R_v/M_v$. Write $km-1=p^{j}s$ with $p\nmid s$. Choose $a' \in R_v$ such that $(\overline{a'})^{\,p^j} = \overline{a}$. In this situation, 
$$F(x)\equiv x\bigl(x^{km-1}-a\bigr) \equiv x\bigl(x^{s}-a'\bigr)^{p^{j}} \pmod{M_v}.$$
Set $h(x)=x^{s}-a'$ ,i.e., $x^{s}=h(x)+a'$. Raising both sides to the power $p^{j}$ and using the binomial expansion, we obtain:
\begin{equation}\label{eq:4.9}
x^{km-1} = h(x)^{p^{j}} + (a')^{p^{j}} + \pi h(x)H_{1}(x)
\end{equation}
for some polynomial $H_{1}(x)\in R_v[x]$. Since $c \in M_v$, we may expand $F(x)$ as
\begin{equation}\label{eq:4.10}
F(x)= x(x^{km-1}-a)+m c x^{k(m-1)} + \pi^{2}H_{2}(x)
\end{equation}
for some $H_{2}(x)\in R_v[x]$. Write $h(x)=g_{1}(x)\cdots g_{t}(x)+\pi H(x),$ where $g_{1}(x),\ldots,g_{t}(x)$ are distinct monic polynomials that are irreducible modulo $M_v$ and $H(x)\in R_v[x]$.
Substituting \eqref{eq:4.9} into \eqref{eq:4.10} and using the expression for $h(x)$, we obtain:
\begin{align*}
F(x) &= x\,h(x)^{p^{j}} + m c x^{k(m-1)} + ((a')^{p^{j}}-a)x + \pi h(x)H_{3}(x) + \pi^{2}H_{4}(x), \\
&= x\left(\prod_{i=1}^{t} g_i(x)+\pi H(x)\right)^{p^{j}} + x\bigl(m c x^{k(m-1)-1} + ((a')^{p^{j}}-a)\bigr) + \pi h(x)H_{3}(x) + \pi^{2}H_{4}(x)
\end{align*}
for some $H_{3}(x), H_{4}(x)\in R_v[x]$. Observe that $x, \overline{g}_{1}(x), \ldots, \overline{g}_{t}(x)$ are pairwise distinct irreducible factors of $\overline{F}(x)$.
Since $j\ge1$, it follows that $F(x)\in\langle \pi, g_i(x)\rangle^{2}$ for some $i$ if and only if the term $x\bigl(m c x^{k(m-1)-1} + ((a')^{p^{j}}-a)\bigr)$ lies in $\langle \pi, g_i(x)\rangle^{2}$. Since $x$ and $g_i(x)$ are coprime modulo $M_v$, this is equivalent to $m c x^{k(m-1)-1} + ((a')^{p^{j}}-a) \in \langle \pi, g_i(x) \rangle^2$. Write $(a')^{p^{j}} - a = \pi a_{1}$ and $c = \pi c_{2}.$ Since $v(km-1) > 0$ implies $v(m) = 0$, $F(x)\in\langle \pi, g_i(x)\rangle^{2}$ for some $i$ if and only if either $v(c_{2}) \ge v(\pi)$ and $v(a_{1}) \ge v(\pi)$, or $v(c_{2}) = 0$ and the polynomials $\overline{m}\,\overline{c}_{2}x^{k(m-1)-1} + \overline{a}_{1}$ and $x^{s}-\overline{a'}$ have a common root modulo $M_v$. Let $\eta$ be such a common root. Then $m c_{2}\eta^{k(m-1)-1} = - a_{1}$ and $\eta^{km-1} = a$ modulo $M_v$. Eliminating $\eta$, we obtain the common-root condition $a^k a_1^{km-1} = (-m c_2 a)^{km-1}$. Therefore, we conclude that $R_v[\theta]$ is integrally closed if and only if either $v(c_2) \ge v(\pi)$ and $v(a_1) = 0$ or $v(c_2(a^ka_1^{km-1}-(-mc_2a)^{km-1})) = 0$. This completes the proof in the present case.\\[2mm]
\textbf{Case (iii).} Suppose $n \geq 2$, $c \in M_v$, and $a \notin M_v$. In this case,
$F(x)\equiv x^{n}\bigl(x^{km-n}-a\bigr)\pmod{M_v}.$ Since $n\ge2$, it follows that $x$ is a repeated root of $\overline{F}(x)$. Arguing as in Case (i), we obtain $F(x) \notin \langle \pi, x \rangle^2$ if and only if $c^m \notin M_v^2$. As $c \in M_v$, this is possible only when $m=1$ and $c \notin M_v^2$ (i.e., $v(c) = v(\pi)$). Hence, we may assume throughout this case that $m=1$. Then $F(x)=x^{k}-ax^{n}+c$ and modulo $M_v$ we have $\overline{F}(x)= x^{n}\bigl(x^{k-n}-\overline{a}\bigr).$ Let $\ell\ge0$ be the highest power of $p$ dividing $k-n$.\\
First, assume that $\ell=0$, i.e., $v(k-n) = 0$.
Since $c \in M_v$ and $a \notin M_v$, the reduction $\overline{F}(x)=x^{n}\bigl(x^{k-n}-\overline{a}\bigr)$ has $x$ as its only possible repeated irreducible factor. In this situation, it is easy to check that $F(x) \in \langle \pi, x \rangle^2$ if and only if $c \in M_v^2$. Thus, when $\ell=0$, the desired conclusion $v(c) = v(\pi)$ follows immediately.\\
Now assume that $\ell\ge1$ and write $k-n=p^{\ell}s$ with $p\nmid s$. Set $h(x)=x^{s}-a'$ where $(\overline{a'})^{\,p^\ell} = \overline{a}$. We may write $h(x)=g_{1}(x)\cdots g_{t}(x)+\pi H(x),$ where $g_{1}(x),\dots,g_{t}(x)$ are distinct monic polynomials that are irreducible modulo $M_v$ and $H(x)\in R_v[x]$.
Applying Lemma~\ref{binomial expansion} to $h(x)$, we obtain
\[
F(x)=x^{n}\Biggl[
\left(\prod_{i=1}^{t} g_i(x)+\pi H(x)\right)^{p^{\ell}}
+ \pi H_1(x)\prod_{i=1}^{t} g_i(x)
+ \pi^{2} H_2(x)
-a+(a')^{p^{\ell}}
\Biggr]+c
\]
for some polynomials $H_1(x), H_2(x)\in R_v[x]$. Observe that $x,\overline{g}_{1}(x),\ldots,\overline{g}_{t}(x)$ are pairwise distinct irreducible factors of $\overline{F}(x)$.
Since $\ell\ge1$, the first three terms inside the brackets are in $\langle \pi, g_i(x)\rangle^{2}$ for each $1\le i\le t$.
Consequently, $F(x)\in\langle \pi, g_i(x)\rangle^{2}$ for some $i$ if and only if the remaining term $\bigl((a')^{p^{\ell}}-a\bigr)x^{n}+c$ lies in $\langle \pi, g_i(x)\rangle^{2}$. Write $(a')^{p^{\ell}} - a=\pi a_{1}$ and $c=\pi c_{2}.$
Then $F(x)\in\langle \pi, g_i(x)\rangle^{2}$ for some $i$ if and only if either $v(a_{1}) \ge v(\pi)$ and $v(c_{2}) \ge v(\pi)$, or $v(a_1) = 0$ and the polynomials $\overline{a}_{1}x^{n}+\overline{c}_{2}$ and $x^{k-n}-\overline{a}$ have a common root modulo $M_v$. Let $\eta$ be such a common root. Then
\[
\overline{a}_{1}\eta^{n} = - \overline{c}_{2}, \qquad \eta^{k}=\overline{a}\eta^{n}.
\]
Simplifying these relations and using $k=k_{1}t$ and $n=n_{1}t$, we obtain
\[
(-\overline{c}_{2})^{k_{1}-n_{1}} = \overline{a}^{n_{1}} \overline{a}_{1}^{k_{1}-n_{1}}.
\]
By Theorem~\ref{valuation dede}, we conclude that $R_v[\theta]$ is integrally closed if and only if either $v(a_{1}) \ge v(\pi)$ and $v(c_{2}) = 0$, or $v\bigl(a_1 [a^{n_{1}}a_{1}^{k_{1}-n_{1}}-(-c_{2})^{k_{1}-n_{1}}]\bigr) = 0$. This completes the proof in the present case.\\[2mm]
\textbf{Case (iv).} Suppose that $c \notin M_v$ and $a \in M_v$. Then $F(x) \equiv (x^{k}+c)^{m} \pmod{M_v}.$ There are two possibilities to consider, namely $m\ge 2$ and $m=1$.\\[1mm]
First, assume that $m\ge 2$. Let $g(x) \in R_v[x]$ be a monic polynomial whose reduction $\overline{g}(x)$ is an irreducible factor of $x^{k}+\overline{c}$ over $R_v/M_v$. Then $F(x)\in\langle \pi, g(x)\rangle^{2}$ if and only if $a \in M_v^2$. Hence, by Theorem~\ref{valuation dede}, we obtain that $R_v[\theta]$ is integrally closed if and only if $a \notin M_v^2$ (i.e., $v(a) = v(\pi)$).\\[1mm]
Now assume that $m=1$. In this case $F(x)=x^{k}-ax^{n}+c,$ and since $a \in M_v$, we have $F(x)\equiv x^{k}+c \pmod{M_v}.$ Moreover, as $v(\D_F) > 0$, it follows from the discriminant formula that $v(k) > 0$. Let $p$ be the characteristic of $R_v/M_v$. Write $k=p^{\ell}s$ with $p\nmid s$. Since $R_v/M_v$ is perfect, choose $c' \in R_v$ such that $(\overline{c'})^{\,p^\ell} = \overline{c}$. By the Binomial theorem, we then obtain $F(x)\equiv (x^{s}+c')^{\,p^{\ell}} \pmod{M_v}.$ Let $\overline{g}_{1}(x),\ldots,\overline{g}_{r}(x)$ be the factorization of $x^{s}+\overline{c'}$ over $R_v/M_v$, where $g_i(x)\in R_v[x]$ are monic, pairwise distinct, and irreducible modulo $M_v$. We may therefore write $x^{s}+c' = g_{1}(x)\cdots g_{r}(x) + \pi H(x)$ for some polynomial $H(x)\in R_v[x]$. Set $h(x)=x^{s}+c'$. Applying Lemma~\ref{binomial expansion} to $h(x)$, we obtain
\begin{equation}\label{eq:case-iii-expansion}
F(x)= \left(\prod_{i=1}^{r} g_i(x) + \pi H(x)\right)^{p^{\ell}}
   + \pi H_1(x)\prod_{i=1}^{r} g_i(x)
   + \pi^{2}H_2(x) + c +(-c')^{p^{\ell}} - ax^{n},
\end{equation} 
for some polynomials $H_1(x), H_2(x)\in R_v[x]$.
Since $\ell\ge 1$, the first three terms on the right-hand side of \eqref{eq:case-iii-expansion} lie in the ideal $\langle \pi, g_i(x)\rangle^{2}$ for each $1\le i\le r$.
Therefore, $F(x)\in\langle \pi, g_i(x)\rangle^{2}$ for some $i$ if and only if $-ax^{n}+c+(-c')^{p^{\ell}} = \pi(-a_{2}x^{n}+c_{1})$ belongs to $\langle \pi, g_i(x)\rangle^{2}$. It follows that $\pi(-a_{2}x^{n}+c_{1})\in\langle \pi, g_i(x)\rangle^{2}$ for some $i$ if and only if either $v(a_{2}) \ge v(\pi)$ and $v(c_{1}) \ge v(\pi)$, or $v(a_{2}) = 0$ and the polynomials $-\overline{a}_{2}x^{n}+\overline{c}_{1}$ and $x^{s}+\overline{c'}$ have a common root modulo $M_v$. Let $\eta$ be such a common root.
Then $-\overline{a}_{2}\eta^{n}+\overline{c}_{1}=0$ and $\eta^{k}+\overline{c}=0.$ Writing $k=k_{1}t$ and $n=n_{1}t$, we obtain
\[
\overline{a}_{2}(\eta^{t})^{n_{1}}=\overline{c}_{1}
\quad \text{and} \quad
(\eta^{t})^{k_{1}}=-\overline{c}.
\]
Raising the first equation to the power $k_1$ and the second equation to the power $n_1$, we obtain
\[
\overline{a}_{2}^{k_{1}}(\eta^{t})^{n_{1}k_{1}} =\overline{c}_{1}^{k_{1}}
\quad \text{and} \quad
(\eta^{t})^{k_{1}n_{1}} =(-\overline{c})^{n_{1}}.
\]
Comparing these expressions, we conclude that $\overline{a}_{2}^{k_{1}}(-\overline{c})^{n_{1}} = \overline{c}_{1}^{k_{1}}.$ Therefore, the polynomials $-\overline{a}_{2}x^{n}+\overline{c}_{1}$ and $x^{s}+\overline{c'}$ are coprime modulo $M_v$ if and only if $v\bigl(a_{2}^{k_{1}}(-c)^{n_{1}}-c_{1}^{k_{1}}\bigr) = 0.$ Therefore, by Theorem~\ref{valuation dede}, we obtain that $R_v[\theta]$ is integrally closed if and only if either $v(a_2) \ge v(\pi)$ and $v(c_1) = 0$ or $v\bigl(a_2(a_{2}^{k_{1}}(-c)^{n_{1}}-c_{1}^{k_{1}})\bigr) = 0$.\\[2mm]
\textbf{Case (v).} Suppose $ca \notin M_v$ and $k \in M_v$. By the discriminant formula, $v(\D_F) > 0$ implies $v(\mathcal{C} - \mathcal{E}) > 0$, and hence $v(n(km - n)) > 0.$ Since $a \notin M_v$ and $c \notin M_v$, we obtain $n \in M_v$. Write $k = p^{j}s'$ and $n = p^{j}s,$ with $j \ge 1$ and $p \nmid \gcd(s,s')$. Choose $a', c' \in R_v$ such that $(\overline{a'})^{\,p^j} = \overline{a}$ and $(\overline{c'})^{\,p^j} = \overline{c}$. Then
\[
F(x) \equiv \Bigl( (x^{s'} + c')^m - a' x^s \Bigr)^{p^{j}} \pmod{M_v}.
\]
Denote $(x^{s'} + c')^m - a' x^s$ by $h(x)$. Then one can easily check that $F(x)\equiv h(x)^{p^j} \pmod{M_v}$. Write $h(x) = g^{e_1}_1(x)\cdots g^{e_d}_d(x) + \pi H(x)$, $e_i>0$, where $g_1(x), \ldots, g_d(x)$ are monic polynomials that are pairwise distinct and irreducible modulo $M_v$ and $H(x) \in R_v[x]$. Using Lemma \ref{lemma:3.6}, we have the following:
\begin{align*}
    F(x) &=  h(x)^{\,p^{j}}+((a')^{\,p^{j}} - a)x^{n} - \pi m\, t(x)(x^{k} + c)^{\,m-1}+ \pi\,h(x)H_{1}(x) + \pi^{2}H_{4}(x),\\
    &=\left(\prod\limits_{i=1}^{d}g^{e_i}_i(x)+\pi H(x)\right)^{\,p^j}+((a')^{\,p^{j}} - a)x^{n} - \pi m\, t(x)(x^{k} + c)^{\,m-1}\\
    &\quad + \pi\left(\prod\limits_{i=1}^{d}g^{e_i}_i(x)+\pi H(x)\right)H_{1}(x) + \pi^{2}H_{4}(x)
\end{align*}
for some polynomials $H_1(x), H_4(x)\in R_v[x]$ and $t(x)$ defined in  Lemma \ref{lemma:3.6}. Write $F(x)=\left(g_1(x)^{e_1} \cdots g_d(x)^{e_d}+\pi H(x)\right)^{p^j}+\pi M(x)$ for some $M(x)\in R_v[x].$ Since $j>0$, by Theorem \ref{valuation dede}, we see that $R_v[\theta]$ is integrally closed if and only if $\overline{M}(x)$ is coprime to $\overline{h}(x)$, which holds if and only if the polynomial $\frac{1}{\pi}[((a')^{\,p^{j}} - a)x^{n} - \pi m\, t(x)(x^{k} + c)^{\,m-1}]$ is coprime to $h(x)$ modulo $M_v$. This proves the theorem in this case.\\[2mm]
\textbf{Case (vi).} Suppose $cak \notin M_v$ and $m \in M_v$. By the discriminant formula, $v(\D_F) > 0$ implies $n \in M_v$. Write $n = p^{\ell}s$ and $m = p^{\ell}s',$ where $\ell \ge 1$ and $p \nmid \gcd(s,s')$. Choose $a' \in R_v$ such that $(\overline{a'})^{\,p^\ell} = \overline{a}$. Define $h(x) = (x^{k}+c)^{s'} - a' x^{s}.$ Then it is easy to check that $F(x) \equiv h(x)^{\,p^{\ell}} \pmod{M_v}.$
We first expand $(h(x)+a' x^{s})^{p^{\ell}}$ using the Binomial expansion:
\[
(h(x)+a' x^{s})^{p^{\ell}} = h(x)^{p^{\ell}} + \pi H_1(x)h(x) + (a')^{p^{\ell}}x^{n}
\]
for some polynomial $H_1(x)\in R_v[x]$. Since $h(x)=(x^{k}+c)^{s'}-a' x^{s}$, it follows that
\[
(x^{k}+c)^{m} = h(x)^{p^{\ell}} + \pi\,h(x)H_1(x) + ((a')^{p^{\ell}}-a+a)x^{n}.
\]
Consequently, we may write the following:
\begin{equation}\label{eq:5.1}
F(x) = h(x)^{p^{\ell}} + \pi\,h(x)H_{1}(x) + ((a')^{p^{\ell}}-a)x^{n}.
\end{equation}
Next, we write $h(x)=g_{1}(x)^{e_{1}}\cdots g_{d}(x)^{e_{d}} + \pi\,H(x), \, e_i>0,$ where $g_{1}(x),\ldots,g_{d}(x)$ are distinct monic polynomials that are irreducible modulo $M_v$ and $H(x)\in R_v[x]$. Substituting this expression into \eqref{eq:5.1}, we obtain
\begin{align*}
F(x)
&= \left(\prod_{i=1}^{d} g_{i}(x)^{e_{i}} + \pi\,H(x)\right)^{p^{\ell}}
   + \pi\,\left(\prod_{i=1}^{d} g_{i}(x)^{e_{i}} + \pi\,H(x)\right)H_{1}(x) + ((a')^{p^{\ell}}-a)x^{n}, \\
&= \left(\prod_{i=1}^{d} g_{i}(x)^{e_{i}}\right)^{p^{\ell}}
   + \pi\,\left(\prod_{i=1}^{d} g_{i}(x)^{e_{i}} + \pi\,H(x)\right)H_{1}(x) + \pi^{2}H_{2}(x) + ((a')^{p^{\ell}}-a)x^{n}
\end{align*}
for some polynomial $H_{2}(x)\in R_v[x]$. Thus, we may write 
$F(x)=\left(g_{1}(x)^{e_{1}}\cdots g_{d}(x)^{e_{d}}+\pi H(x)\right)^{p^{\ell}} + \pi M(x),$ for some $M(x)\in R_v[x]$. Since $\ell>0$, Theorem~\ref{valuation dede} implies that $R_v[\theta]$ is integrally closed if and only if $\overline{M}(x)$ is coprime to $\overline{h}(x)$ modulo $M_v$.
This holds if and only if the polynomial $\overline{a}_1 x^{n}$ is coprime to $\overline{h}(x)$ modulo $M_v$, which is equivalent to the condition $v(a_1) = 0$, where $a_1=\frac{a - (a')^{p^{\ell}}}{\pi}.$ This condition also simplifies to $v(a-(a')^p) = v(\pi)$. This completes the proof in this case.\\[2mm] 
\textbf{Case (vii).} Now consider the last case where $ackm \notin M_v$. As $v(\D_F) > 0$ and $ackm \notin M_v$, we have $v(\mathcal{C} - \mathcal{E}) > 0$. Furthermore, using $v(\mathcal{C}- \mathcal{E}) > 0$ along with $v(km) = 0$, it follows that $v(n(km-n)) = 0$. We claim that there exists an element $\alpha \in R_v$ such that the polynomial $x^t - \overline{\alpha}$ is the product of all distinct monic repeated irreducible factors of $\overline{F}(x)$ over $R_v/M_v$. Let $\alpha_1$ and $\alpha_2$ be as defined in Proposition \ref{2.06} and $\alpha \equiv \alpha_1^{u_1} \alpha_2^{u_2} \pmod{M_v^2}$. Suppose $\beta$ is any repeated root of $\overline{F}(x)= (x^k+\overline{c})^m -\overline{a} x^n$ in the algebraic closure of $R_v/M_v.$ Then using the derivation similar to that for \eqref{0.009}, we have $\beta^k \equiv \alpha_1 \pmod{M_v}$. Clearly, $\beta^k \in R_v/M_v$ and $F(\beta)\equiv 0 \pmod{M_v}$ and using \eqref{special} imply that
    \begin{align*}
        \beta^n \equiv a^{-1} (\beta^k +c)^m \equiv \alpha_{2} \in R_v/M_v.
    \end{align*}
    As $\gcd(n,k)=t$, there exist $u_1,u_2\in\mathbb{Z}$ such that $ku_1+nu_2=t.$ Using this we obtain $\beta^t = (\beta^k)^{u_1}(\beta^n)^{u_2} \in R_v/M_v$, i.e., $\beta^t \equiv \alpha_1^{u_1} \alpha_{2}^{u_2}\pmod{M_v}$. Thus, we have proved that any repeated root of $\overline{F}(x)$ is a root of $x^t-\overline{\alpha}$. Now suppose $\beta_1$ is root $x^t-\alpha$ modulo $M_v$, then $\beta_1$ satisfies $\beta_1^t \equiv \alpha_1^{u_1} \alpha_{2}^{u_2} \pmod{M_v}$. Proposition \ref{2.06} yields $\beta_1$ is a repeated root of $\overline{F}(x)$ if and only if $\mathcal{C} - \mathcal{E} \in M_v$. Hence, using the fact that $v(\mathcal{C} - \mathcal{E}) >  0$, we conclude that every root of $x^t-\overline{\alpha}$ is a repeated root of $F(x)$ modulo $M_v$. Since $t=\gcd(n,k)$, it is easy to see that
    \begin{equation}\label{3.20}
        F(x)= ((x^t)^{k_1}+c)^m -a(x^t)^{n_1} = (x^t-\alpha)q(x) + (\alpha^{k_1}+c)^m - a \alpha^{n_1}
    \end{equation}
    for some $q(x) \in R_v[x^t]$. As $x^t - \overline{\alpha}$ divides $\overline{F}(x)$, we have $\overline{F}(x) = (x^t-\overline{\alpha})\overline{q}(x)$. Let $\overline{F}(x)=\overline{g}_1(x)^{e_1}\cdots \overline{g}_t(x)^{e_t}$ be the factorization of $\overline{F}(x)$ into a product of powers of distinct irreducible polynomials over $R_v/M_v$ with each $g_{i}(x)\in R_v[x]$ monic. If necessary, after renaming, assume that $e_{i}>1$ for $1\leq i\leq t_{1}$ and $e_i=1$ for $t_1< i \leq t$. Then $x^t-\overline{\alpha}=\prod\limits_{i=1}^{t_1}\overline{g}_i(x)$. Write\\[2mm]
    $x^t-\alpha=\prod\limits_{i=1}^{t_1} g_i(x)+\pi H_1(x), \quad q(x)=\prod\limits_{i=1}^{t_1}g_i(x)^{e_i-1} \prod\limits_{i=t_1+1}^{t}g_i(x)+\pi H_2(x)$ \\[2mm] 
    for some $H_1(x),H_2(x)\in R_v[x]$. Substituting from the above equation into $\eqref{3.20}$, we have
    \begin{align*}
        F(x)=&\prod_{i=1}^{t}g_i(x)^{e_i}
+ \pi H_1(x)\prod_{i=1}^{t_1}g_i(x)^{e_i-1} \prod_{i=t_1+1}^{t}g_i(x)+ \pi H_2(x)\prod_{i=1}^{t_1}g_i(x) \nonumber\\ &+ \pi^2H_1(x)H_2(x) + (\alpha^{k_1}+c)^m - a \alpha^{n_1}.
    \end{align*}
 Clearly, each summand on the right-hand side of the above equation except possibly $(\alpha^{k_1}+c)^m - a \alpha^{n_1}$ belongs to $\langle \pi, g_i(x) \rangle^2$ for $1\leq i\leq t_1$. So $F(x) \in \langle \pi, g_i(x) \rangle^2$ for some $i$ ($1\leq i\leq t_1$) if and only if $(\alpha^{k_1}+c)^m - a \alpha^{n_1} \in M_v^2$. As $\overline{F}(x)$ is not divisible by $\overline{g}_i(x)^2$ for $t_1< i\leq t$, it is clear that $F(x)\not\in  \langle \pi, g_i(x) \rangle^2$ for such $i$. So $F(x)\not \in  \langle \pi, g_i(x) \rangle^2$ for any $i$ ($1 \leq i \leq t$) if and only if $(\alpha^{k_1}+c)^m - a \alpha^{n_1} \notin M_v^2$. To complete this case, it only remains to prove that $(\alpha^{k_1}+c)^m - a \alpha^{n_1} \in M_v^2$ if and only if $\mathcal{C} - \mathcal{E} \in M_v^2$. This follows from Proposition \ref{2.06} and hence the theorem is proved.
\end{proof}

\noindent\textbf{Acknowledgements.}
The author sincerely thank Professor Shanta Laishram and Dr. Prabhakar Yadav for carefully reading an earlier version of this paper. Their valuable suggestions and helpful discussions through email greatly improved the presentation of the paper. 

\bibliographystyle{amsplain}
\bibliography{references}  
\end{document}